\newtheorem{theorem}{Theorem}
\newtheorem{lemma}[theorem]{Lemma}
\theoremstyle{definition}
\newtheorem{definition}[theorem]{Definition}
\newcommand{\adj}{\operatorname{adj}}
\newcommand{\PM}{\mathbb{P}\mathscr{M}}
\begin{document}

\title{$3 \times 3$~singular matrices of linear forms}
\author[]{Damiano Testa}
\begin{abstract}
We determine the irreducible components of the space of $3 \times 3$~matrices of linear forms with vanishing determinant.  We show that there are four irreducible components and we identify them concretely.  In particular, under elementary row and column operations with constant coefficients, a $3 \times 3$~matrix with vanishing determinant is equivalent to one of the following four: a matrix with a zero row, a zero column, a zero $2 \times 2$~square or an antisymmetric matrix.
\end{abstract}
\address{\vfill\begin{minipage}{250pt}
Mathematical Institute \\
Zeeman Building \\
University of Warwick \\
Coventry CV4 7AL \\
UK\vphantom{${W_W}$}
\end{minipage}}
\date{\today}
\subjclass[2010]{14M12, 15A54, 13C40}
\keywords{Determinantal varieties, determinantal ideals, matrices of linear forms}
\email{\begin{minipage}[t]{10pt}
adomani@gmail.com
\end{minipage}}
\maketitle

\section*{Introduction}

If $X$ is a square matrix over a field and the determinant of $X$ vanishes, it follows that there is a non-zero linear combination of the rows of $X$ that vanishes.  This is no longer true if the matrix is a matrix of linear forms and we only allow linear combinations by constants.  For instance, letting $x_1, \ldots , x_5$ be independent variables, no non-trivial constant linear combination of the rows of the matrices
\begin{equation} \label{eq:ese}
\begin{pmatrix}
x_1 & x_1 \cr
x_2 & x_2
\end{pmatrix}
\quad \quad
\begin{pmatrix}
x_1 & x_2 & x_3 \cr
x_4 & 0 & 0 \cr
x_5 & 0 & 0
\end{pmatrix}
\end{equation}
vanishes, although both matrices have zero determinant.  In the case of the first matrix in~\eqref{eq:ese}, the difference of the {\emph{columns}} vanishes, while in the case of the second matrix there is also no non-trivial linear combination of the columns that vanishes.  Thus, we have identified three types of $3 \times 3$~matrices with vanishing determinants: matrices with a zero row, matrices with a zero column and matrices with a zero $2 \times 2$~square.  Up to multiplication on the left and on the right by an invertible $3 \times 3$~matrix of constants, there is one more kind of matrix that has vanishing determinant (Theorem~\ref{t:comp}) and is not of one of the three types above: antisymmetric matrices (see Table~\ref{ta:mat}).

\begin{table}[h]
$\begin{array}{c@{\hspace{20pt}}c@{\hspace{20pt}}c@{\hspace{20pt}}c}
\begin{pmatrix}
\star & \star & \star \cr
\star & \star & \star \cr
0 & 0 & 0
\end{pmatrix}
&
\begin{pmatrix}
\star & \star & \star \cr
0 & \star & 0 \cr
0 & \star & 0
\end{pmatrix}
&
\begin{pmatrix}
0 & x_1 & -x_2 \cr
-x_1 & 0 & x_3 \cr
x_2 & -x_3 & 0
\end{pmatrix}
&
\begin{pmatrix}
0 & \star & \star \cr
0 & \star & \star \cr
0 & \star & \star
\end{pmatrix}
\\ [20pt]
\textrm{Zero row} & 
\textrm{Zero square} & 
\textrm{Antisymmetric} & 
\textrm{Zero column}
\end{array}$ \\[5pt]
\caption{Matrices with vanishing determinant} \label{ta:mat}
\end{table}

For our purposes, the number of variables for the linear forms appearing in the matrices is not especially relevant.  Nevertheless, the {\it{effective}} number of variables is at most~$9$, as we can always replace the initial variables with a basis for the span of the entries of the $3 \times 3$ matrix in question.  As a consequence of Theorem~\ref{t:comp}, if the dimension of the span of the entries of the matrix is at least~$7$, then the determinant cannot vanish identically.  Thus, in the interesting cases, the number of variables could be limited to~$6$.

In a broader context, for $m,n,r$ positive integers, the loci of $m \times n$ matrices with entries in a field and rank at most $r$ are called {\it{determinantal varieties}}, see for instance~\cite{harris}*{Lecture~$9$}.  The extension of the definition of determinantal varieties to the case in which the entries of the matrix are linear forms is especially relevant in the context of rational normal scrolls and minimal varieties: all of this is very classical and the paper~\cite{eiha} is a good introduction to the topic, its history and relevant results.  Eisenbud introduces in~\cite{ei} the property of {\emph{$1$-genericity}} for spaces of matrices: a sort of non-degeneracy for the entries of a matrix of linear forms.  One of his aims is to obtain conditions under which sections of small codimension of certain spaces of matrices inherit the irreducibility property.  His ideas have later been applied, developed and extended, see for instance~\cites{eiret,katz,raiant,raicat,kupula}.  Our work starts with similar questions, but we do not limit the entries of our matrices.  First, we do not impose any genericity condition: we study $3 \times 3$ matrices of linear forms with no restrictions.  Second, we find several irreducible components for our schemes and, in fact, we are able to determine all of them.  Finally, this leads to questions regarding a deeper understanding of the irreducible components of the schemes we introduce and their ideals.

\section{Notation and preliminary results}

Let $k$ be a field and let $n$ be a positive integer.  We denote 
\begin{itemize}
\item
the algebra of polynomials in $x_1,\ldots,x_n$ with coefficients in $k$ by $R = k[x_1,\ldots,x_n]$, 
\item
the field of fractions of $R$ by $K = k(x_1,\ldots,x_n)$.
\end{itemize}
Let $M$ be a matrix with entries in $R$.  The matrix $M$ is {\em{homogeneous}} if the entries of $M$ are homogeneous forms of the {\emph{same}} degree; the matrix $M$ is {\em primitive} if the only polynomials in $R$ dividing all the entries of $M$ are constants; the degree of $M$ is $d = \deg M$ if the maximum of the degrees of the entries of $M$ is $d$.  In the cases of interest below, the entries of $M$ will in fact be homogeneous of the same degree; we state our results for not necessarily homogeneous matrices: the statements in the homogeneous case follow at once observing that if the product of two polynomials is homogeneous and non-zero, then the two polynomials are themselves homogeneous.  A {\em{vector}} is a matrix consisting of a single column; a {\em{row vector}} is a matrix consisting of a single row.

We begin with an easy lemma on matrices of rank~$1$ and entries in~$R$.

\begin{lemma} \label{l:rangouno}
Let $a,b$ be positive integers and let $X$ be an $a \times b$ matrix with entries in $R$.  If the rank of $X$ is~$1$, then there exist two non-zero column vectors ${\bf{u}} \in R^a$ and ${\bf{v}} \in R^b$ such that the identity $X = {\bf{u}} {\bf{v}}^t$ holds.  The vectors ${\bf{u}}$ and ${\bf{v}}$ satisfy $\deg {\bf{u}} + \deg {\bf{v}} = \deg X$ and if the matrix $X$ is homogeneous, then they are also homogeneous.
\end{lemma}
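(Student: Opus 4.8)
The plan is to work over the fraction field $K$ first to locate the rank-one factorization, and then to clear denominators carefully using primitivity. Since $X$ has rank $1$ over $R$, it also has rank $1$ over $K$, so over $K$ there is a non-zero column $\mathbf{u}_0 \in K^a$ and a non-zero row $\mathbf{v}_0^t \in K^b$ with $X = \mathbf{u}_0 \mathbf{v}_0^t$; concretely one may take $\mathbf{u}_0$ to be any non-zero column of $X$ and read off $\mathbf{v}_0$ from the linear dependence relations expressing the other columns as $K$-multiples of it. Scaling $\mathbf{u}_0$ and $\mathbf{v}_0$ by reciprocal elements of $K^\times$ does not change the product, so I may first clear denominators to assume $\mathbf{u}_0, \mathbf{v}_0 \in R^a, R^b$, and then divide $\mathbf{u}_0$ by the gcd of its entries to assume $\mathbf{u} := \mathbf{u}_0$ is primitive as a vector, absorbing that gcd into $\mathbf{v}$.

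Next I would argue that with $\mathbf{u}$ primitive, the corresponding $\mathbf{v}$ already lies in $R^b$, i.e. no denominators remain. The identity $X = \mathbf{u}\mathbf{v}^t$ with $X$ having entries in $R$ means that, writing $\mathbf{v} = \mathbf{w}/f$ with $\mathbf{w} \in R^b$ and $f \in R$, we have $f X = \mathbf{u}\mathbf{w}^t$, so $f$ divides $u_i w_j$ for all $i,j$. Let $p$ be an irreducible factor of $f$ with multiplicity $m \geq 1$; since $\mathbf{u}$ is primitive, $p$ fails to divide some entry $u_i$, hence $p^m \mid w_j$ for every $j$ (looking at $u_i w_j$ and using that $R$ is a UFD), so $p^m \mid \mathbf{w}$. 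Ranging over all irreducible factors of $f$ shows $f \mid \mathbf{w}$, so $\mathbf{v} = \mathbf{w}/f \in R^b$, as desired. Both $\mathbf{u}$ and $\mathbf{v}$ are non-zero because $X \neq 0$.

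For the degree statement, let $d = \deg X$. From $X = \mathbf{u}\mathbf{v}^t$ we get $x_{ij} = u_i v_j$, so $\deg x_{ij} \leq \deg u_i + \deg v_j \leq \deg \mathbf{u} + \deg \mathbf{v}$ for all $i, j$, giving $\deg X \leq \deg \mathbf{u} + \deg \mathbf{v}$. For the reverse inequality, pick $i$ with $\deg u_i = \deg \mathbf{u}$; since $\mathbf{u}$ is primitive, the entries $u_1, \dots, u_a$ have no common factor, and I would use this to find an index — more precisely, among the entries $x_{i1}, \dots, x_{ib}$ in row $i$ one has $\gcd_j(x_{ij}) = \gcd_j(u_i v_j) = u_i \gcd_j(v_j)$; choosing $j$ with $\deg v_j = \deg \mathbf{v}$ need not immediately give $\deg u_i + \deg v_j$ as a subset of the degrees appearing, so instead I would simply note that $\deg\mathbf{u} + \deg\mathbf{v}$ is attained: pick $i,j$ maximizing $\deg u_i$ and $\deg v_j$ respectively; then $x_{ij} = u_i v_j$ has degree exactly $\deg u_i + \deg v_j = \deg\mathbf{u}+\deg\mathbf{v}$, so $\deg X \geq \deg\mathbf{u}+\deg\mathbf{v}$. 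Hence equality holds. Finally, if $X$ is homogeneous of degree $d$, then each nonzero $x_{ij} = u_i v_j$ is homogeneous; since a product of polynomials is homogeneous and nonzero only if each factor is homogeneous, and since $\mathbf{u}$ primitive has some nonzero entry in every suitable position, a short argument propagates homogeneity to all entries of $\mathbf{u}$ and $\mathbf{v}$ (fixing one nonzero $u_i$, every $v_j$ with $u_i v_j \neq 0$ is homogeneous; the vanishing ones are trivially homogeneous; then symmetrically every $u_{i'}$ is homogeneous). The main obstacle is the bookkeeping in this last homogeneity propagation and in the reverse degree inequality — making sure primitivity is invoked correctly so that the "extremal" entry of $\mathbf{u}$ or $\mathbf{v}$ actually pairs with a nonzero, full-degree entry of the matrix; everything else is standard UFD manipulation.
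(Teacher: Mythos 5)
Your proposal is correct and follows essentially the same route as the paper: extract a column, divide out its gcd to obtain a primitive vector $\mathbf{u}$, and use the UFD structure of $R$ to show the proportionality factors (your $\mathbf{v}$) lie in $R$ rather than merely in $K$. You merely spell out in more detail the divisibility step and the degree/homogeneity assertions that the paper dismisses as immediate.
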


\begin{proof}
Denote by ${\bf{u}}'$ a non-zero column of $X$; let $u \in R$ denote the greatest common divisor of all the entries of the vector ${\bf{u}}'$ and let ${\bf{u}}$ be the primitive column vector in $R^a$ such that ${\bf{u}}'=u{\bf{u}}$.  Since the rank of the matrix $X$ is one, all the columns of $X$ are proportional over $K$ to the vector ${\bf{u}}$; since the entries of the matrix $X$ are contained in $R$ and the vector ${\bf{u}}$ is primitive, all the columns of $X$ are obtained from ${\bf{u}}$ by multiplication by an element of $R$.  Denote by ${\bf{v}} = (v_1 \, \cdots \, v_b)^t \in R^b$ the column vector such that, for $i \in \{1,\ldots,b\}$, the $i$-th entry of ${\bf{v}}$ is the factor $v_i \in R$ with the property that $v_i {\bf{u}}$ is the $i$-th column of $X$.  By construction the identity $X = {\bf{u}} {\bf{v}}^t$ holds.  The assertion about homogeneous matrices is immediate.
\end{proof}

The next result is a direct consequence of Cramer's rule.  Recall that for a square matrix $M$, the adjoint of $M$ is the matrix $\adj(M)$ whose entry in position $(i,j)$ is $(-1)^{i+j}$ times the determinant of the matrix obtained from $M$ by removing the $i$-th column and the $j$-th row.

\begin{lemma} \label{l:cramer}
Let $d,r$ be non-negative integers and let $X$ be an $(r+1) \times (r+1)$~matrix of rank $r$ and degree $d$ with entries in $R$.  There exist two non-zero vectors ${\bf{u}}$ and~${\bf{v}}$ with entries in $R$ such that the identities $X {\bf{u}} = (X^t) {\bf{v}} = 0$ and $\deg {\bf{u}} + \deg {\bf{v}} \leq dr$ hold.  If $X$ is homogeneous, then the vectors ${\bf{u}}$ and~${\bf{v}}$ can be chosen to be homogeneous and satisfy the equality $\deg {\bf{u}} + \deg {\bf{v}} = dr$.
\end{lemma}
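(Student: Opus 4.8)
The plan is to use the adjoint matrix $\adj(X)$ to produce the desired vectors. Since $X$ has rank exactly $r$, at least one of the $r \times r$ minors of $X$ is non-zero, so $\adj(X)$ is a non-zero matrix; its entries are, by definition, signed $r \times r$ minors of $X$, hence polynomials in $R$ of degree at most $dr$, and honestly of degree exactly $dr$ when $X$ is homogeneous of degree $d$. The fundamental identity $X \adj(X) = \adj(X) X = \det(X) I = 0$ (the determinant vanishes because the rank is only $r < r+1$) then says that every column of $\adj(X)$ lies in the kernel of $X$ and every column of $(\adj X)^t$, i.e.\ every row of $\adj(X)$, lies in the kernel of $X^t$. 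So I would pick $\bf u$ to be any non-zero column of $\adj(X)$ and $\bf v$ to be (the transpose of) any non-zero row of $\adj(X)$; these exist precisely because $\adj(X) \neq 0$. This immediately gives $X {\bf u} = 0$ and $(X^t) {\bf v} = 0$ with $\deg {\bf u}, \deg {\bf v} \le dr$, hence $\deg {\bf u} + \deg {\bf v} \le 2dr$ — but the claimed bound is $dr$, so a little more care is needed.

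To get the sharper bound $\deg {\bf u} + \deg {\bf v} \le dr$, I would first strip common factors: replace $\bf u$ by the primitive vector proportional to it (dividing by the gcd of its entries) and similarly for $\bf v$. The point is that $\bf u$ and $\bf v$ need not both come from $\adj(X)$ with full degree $dr$; rather, I would argue that the columns of $\adj(X)$ are all proportional over $K$ (since $\adj(X)$ itself has rank $1$ when $X$ has rank exactly $r$), so by Lemma~\ref{l:rangouno} applied to $\adj(X)$ we may write $\adj(X) = {\bf u}\, {\bf w}^t$ for non-zero vectors $\bf u, w$ in $R$ with $\deg {\bf u} + \deg {\bf w} = \deg \adj(X) \le dr$, with equality in the homogeneous case. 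Then $0 = X\adj(X) = (X{\bf u}){\bf w}^t$ forces $X {\bf u} = 0$ since $\bf w \ne 0$ and $R$ is a domain; symmetrically, writing $\adj(X) = {\bf u}'({\bf v})^t$ in the same way (or transposing), $0 = \adj(X) X = {\bf u}'(({X^t}{\bf v})^t)^t$ wait — more cleanly, $(\adj X) X = 0$ gives $({\bf u} {\bf w}^t) X = {\bf u} (X^t {\bf w})^t = 0$, so $X^t {\bf w} = 0$; thus taking ${\bf v} = {\bf w}$ already works, and $\deg {\bf u} + \deg {\bf v} = \deg {\bf u} + \deg {\bf w} \le dr$, with equality when $X$ is homogeneous since then $\adj(X)$ is homogeneous of degree exactly $dr$ and Lemma~\ref{l:rangouno} gives the degree equality.

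The one genuine obstacle is justifying that $\adj(X)$ has rank exactly $1$ (equivalently, that all its columns are proportional over $K$), so that Lemma~\ref{l:rangouno} applies. Over the fraction field $K$, the matrix $X$ has rank $r$, so its kernel is $1$-dimensional over $K$; since every column of $\adj(X)$ lies in $\ker X$ (by $X\adj(X)=0$) and $\adj(X) \ne 0$, the column space of $\adj(X)$ over $K$ is exactly that $1$-dimensional kernel, i.e.\ $\adj(X)$ has rank $1$ over $K$, hence rank $1$ over $R$. I would spell this out carefully, since it is the crux: it is what upgrades the naive bound $2dr$ to the claimed $dr$. The homogeneity statement then follows formally: each entry of $\adj(X)$ is a determinant of an $r \times r$ submatrix of a homogeneous degree-$d$ matrix, hence homogeneous of degree $dr$ (or zero), so $\adj(X)$ is homogeneous of degree $dr$, and Lemma~\ref{l:rangouno} delivers homogeneous $\bf u, v$ with $\deg {\bf u} + \deg {\bf v} = dr$. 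The degenerate case $r = 0$ (where $X$ is a $1\times 1$ zero matrix and $\adj(X)$ is the $1 \times 1$ identity) I would note separately: there one simply takes ${\bf u} = {\bf v} = 1$ with $\deg {\bf u} + \deg {\bf v} = 0 = dr$.
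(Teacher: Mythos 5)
Your proof is correct and takes essentially the same route as the paper: establish that $\adj(X)$ is non-zero of rank~$1$ (via the one-dimensionality of $\ker X$ over $K$), factor it as ${\bf{u}}\,{\bf{v}}^t$ using Lemma~\ref{l:rangouno}, and read off the kernel vectors with the degree bound coming from $\deg \adj(X) \leq dr$. The only differences are cosmetic: some exploratory back-and-forth before you settle on the factorization, and your explicit treatment of the $r=0$ case, which the paper leaves implicit.
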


\begin{proof}
Let $\adj(X)$ denote the adjoint of the matrix $X$ so that the identities 
\[
X \adj(X) = \adj(X) X = \det (X) \operatorname{Id} = 0 
\]
hold.  The matrix $\adj(X)$ has degree at most $dr$; if $X$ is homogeneous of degree $d$, then $\adj(X)$ is homogeneous of degree $dr$.  Since the rank of $X$ is $r$, it follows that the matrix $\adj(X)$ is non-zero; moreover the columns of $\adj(X)$ lie in the kernel of $X$ and hence are proportional over the fraction field $K$ of $R$, because the kernel of $X$ is one-dimensional.  Thus the rank of the matrix $\adj(X)$ is one and we can apply Lemma~\ref{l:rangouno} to the matrix $\adj(X)$: let ${\bf{u}} \in R^m$ and ${\bf{v}} \in R^n$ be vectors such that the equalities $\adj(X) = {\bf{u}} {\bf{v}}^t$ and $\deg {\bf{u}} + \deg {\bf{v}} = \deg \adj(X) \leq dr$ hold.
Finally, the identity $X {\bf{u}} {\bf{v}}^t = 0$ shows that the vector $X {\bf{u}}$ is the zero vector, and similarly the row vector ${\bf{v}}^t X$ is the zero vector, completing the proof.
\end{proof}

To state the next lemma, we introduce a little notation.  Denote by $\mathbb{P}$ the projective space of dimension $n-1$ whose homogeneous coordinate ring is $R$.  Let $r$ be a non-negative integer and let $\ell = (\ell_1 , \ldots , \ell_r) \in R^r$ be a sequence of $r$ linear forms in $R$.  Denote by $\mathscr{L} \subset \mathbb{P}$ the linear subspace of $\mathbb{P}$ defined by the vanishing of the forms in $\ell$ and by $c$ the codimension of $\mathscr{L}$ in $\mathbb{P}$.  Denote by $\mathscr{I}_\mathscr{L}$ the ideal sheaf of $\mathscr{L}$; the sequence $\ell$ defines a surjective homomorphism of sheaves $\bigl( \mathscr{O}_{\mathbb{P}} (-1) \bigr) ^{\oplus r} \longrightarrow \mathscr{I}_\mathscr{L}$, sending $(f_1 , \ldots , f_r)$ to $\sum f_i \ell_i$.  Twisting by $\mathscr{O}_\mathbb{P} (2)$ and taking global sections, we define the vector space $\mathscr{V}_{r,c}^n$ as the kernel of the homomorphism ${\textrm{H}}^0 \bigl( \mathbb{P} , \mathscr{O}_{\mathbb{P}}(1)^{\oplus r} \bigr) \longrightarrow {\textrm{H}}^0 \bigl( \mathbb{P} , \mathscr{I}_\mathscr{L}(2) \bigr)$.

\begin{lemma} \label{l:dimcou}
The dimension of the vector space $\mathscr{V}_{r,c}^n$ defined above is $(r-c)n + \binom{c}{2}$.
\end{lemma}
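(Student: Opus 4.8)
The plan is to reduce the statement to an elementary rank–nullity count for a linear map between finite-dimensional vector spaces. First I would unwind the definitions. Writing $R_1$ and $R_2$ for the spaces of linear and quadratic forms in $R$, we identify $\mathrm{H}^0(\mathbb{P},\mathscr{O}_{\mathbb{P}}(1)^{\oplus r})$ with $(R_1)^{\oplus r}$ and $\mathrm{H}^0(\mathbb{P},\mathscr{I}_\mathscr{L}(2))$ with the degree-$2$ graded piece $(I_\mathscr{L})_2$ of the homogeneous ideal $I_\mathscr{L}\subset R$ of the linear space $\mathscr{L}$. Under these identifications the homomorphism defining $\mathscr{V}_{r,c}^n$ becomes the map $\varphi\colon (R_1)^{\oplus r}\to R_2$ sending $(L_1,\dots,L_r)$ to $\sum_i L_i\ell_i$, and therefore $\mathscr{V}_{r,c}^n=\ker\varphi$. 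Since the ideal of a linear subspace of $\mathbb{P}$ is generated in degree $1$, the image of $\varphi$ is exactly $(I_\mathscr{L})_2=R_1\cdot V$, where $V=\langle\ell_1,\dots,\ell_r\rangle\subseteq R_1$; by rank–nullity, $\dim\mathscr{V}_{r,c}^n=rn-\dim(I_\mathscr{L})_2$.

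It remains to compute $\dim(I_\mathscr{L})_2$. As $\mathscr{L}$ has codimension $c$, the subspace $V$ has dimension $c$, so after an invertible linear change of the variables $x_1,\dots,x_n$ we may assume $V=\langle x_1,\dots,x_c\rangle$ and hence $I_\mathscr{L}=(x_1,\dots,x_c)$. Then $(I_\mathscr{L})_2$ is the span of the monomials $x_ix_j$ with $1\le i\le j\le n$ and $i\le c$; equivalently, it is a complement in $R_2$ of the span of the monomials involving only $x_{c+1},\dots,x_n$. Counting monomials gives $\dim(I_\mathscr{L})_2=\binom{n+1}{2}-\binom{n-c+1}{2}$, which a short computation rewrites as $nc-\binom{c}{2}$. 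Combining this with the previous paragraph yields $\dim\mathscr{V}_{r,c}^n=rn-\bigl(nc-\binom{c}{2}\bigr)=(r-c)n+\binom{c}{2}$, as asserted.

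I do not expect a serious obstacle. The only points that need care are the two sheaf-cohomology identifications — in particular that $\mathrm{H}^0(\mathbb{P},\mathscr{I}_\mathscr{L}(2))=(I_\mathscr{L})_2$ and that this space equals $R_1\cdot V$ because the ideal of a linear subspace has no generators in degree $2$ — together with the observation that replacing the (possibly linearly dependent) tuple $\ell_1,\dots,\ell_r$ by its span $V$ does not affect the image of $\varphi$. The binomial identity $\binom{n+1}{2}-\binom{n-c+1}{2}=nc-\binom{c}{2}$ is routine, and everything else is bookkeeping.
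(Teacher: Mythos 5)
Your proof is correct and is essentially the paper's argument translated from sheaf cohomology into the language of graded pieces of the polynomial ring: the paper computes $\dim \mathrm{H}^0(\mathbb{P},\mathscr{I}_\mathscr{L}(2)) = cn - \binom{c}{2}$ and $\dim \mathrm{H}^0(\mathbb{P},\mathscr{O}_{\mathbb{P}}(1)^{\oplus r}) = rn$ from an exact diagram of sheaves and takes the difference, which is exactly your rank--nullity count with $(I_\mathscr{L})_2$ in place of $\mathrm{H}^0(\mathbb{P},\mathscr{I}_\mathscr{L}(2))$. Your explicit justification of surjectivity (the ideal of a linear space is generated in degree $1$) makes precise the paper's assertion that the diagram stays exact on global sections, but the substance is the same.
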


\begin{proof}
We will use the diagram 
\begin{eqnarray}
& 0 \nonumber \\
& \uparrow \nonumber \\
0 \longrightarrow \hspace{-12pt} & \mathscr{I}_\mathscr{L} (2) & \hspace{-12pt} \longrightarrow \mathscr{O}_{\mathbb{P}} (2) \longrightarrow \mathscr{O}_\mathscr{L} (2) \longrightarrow 0 \label{eq:dia} \\
& \hphantom{{\scriptstyle{\ell}}} \uparrow {\scriptstyle{\ell}} \nonumber \\
& \mathscr{O}_{\mathbb{P}}(1)^{\oplus r} \nonumber 
\end{eqnarray}
of sheaves on $\mathbb{P}$.  Observe that diagram~\eqref{eq:dia} is exact and stays exact if we take global sections.  By definition, the vector space $\mathscr{V}_{r,c}^n$ is the kernel of the homomorphism 
\[
{\textrm{H}}^0 \bigl( \mathbb{P} , \mathscr{O}_{\mathbb{P}}(1)^{\oplus r} \bigr) \longrightarrow {\textrm{H}}^0 \bigl( \mathbb{P} , \mathscr{I}_\mathscr{L}(2) \bigr) .
\]
Standard computations yield the equalities 
\begin{eqnarray*}
\dim {\textrm{H}}^0 \bigl( \mathbb{P} , \mathscr{I}_\mathscr{L}(2) \bigr) & = & cn - \binom{c}{2} 
\\
\dim {\textrm{H}}^0 \bigl( \mathbb{P} , \mathscr{O}_{\mathbb{P}}(1)^{\oplus r} \bigr) & = & rn
\end{eqnarray*}
and, taking differences, we find $\dim \mathscr{V}_{r,c}^n = (r-c)n + \binom{c}{2}$.
\end{proof}

\begin{lemma} \label{l:trefl}
Let $\ell = (\ell_1 , \ell_2 , \ell_3)$ be a triple of linear forms and let $\mathscr{V}_{\ell}$ be the $k$-vector space of triples of linear forms $(f_1,f_2,f_3)$ such that the identity $f_1 \ell_1 + f_2 \ell_2 + f_3 \ell_3 = 0$ holds.
\begin{enumerate}
\item \label{en:ind}
If the linear forms $\ell_1 , \ell_2 , \ell_3$ are $k$-linearly independent, then $\mathscr{V}_{\ell}$ is spanned by the triples $(0 , \ell_3 , -\ell_2)$, $(-\ell_3 , 0 , \ell_1)$, $(\ell_2 , -\ell_1 , 0)$.
\item \label{en:dep}
If the linear forms $\ell_1 , \ell_2$ are $k$-linearly independent and $\varphi_1 \ell_1 + \varphi_2 \ell_2 + \ell_3 = 0$ is a $k$-linear relation, then $\mathscr{V}_{\ell}$ is spanned by the triples $(\ell_2 , -\ell_1 , 0)$ and $\ell (\varphi_1 , \varphi_2 , 1)$, as $\ell$ varies among all linear forms in $R$.
\end{enumerate}
\end{lemma}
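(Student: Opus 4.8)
The plan is to reduce both statements to the dimension count of Lemma~\ref{l:dimcou}. The first point is that $\mathscr{V}_{\ell}$ is precisely the space $\mathscr{V}_{3,c}^n$ introduced just before that lemma, where $c$ is the codimension in $\mathbb{P}$ of the linear subspace $\mathscr{L}$ cut out by $\ell_1, \ell_2, \ell_3$. Indeed, the homomorphism ${\textrm{H}}^0\bigl(\mathbb{P}, \mathscr{O}_\mathbb{P}(1)^{\oplus 3}\bigr) \longrightarrow {\textrm{H}}^0\bigl(\mathbb{P}, \mathscr{I}_\mathscr{L}(2)\bigr)$ defining $\mathscr{V}_{3,c}^n$ sends $(f_1, f_2, f_3)$ to $f_1\ell_1 + f_2\ell_2 + f_3\ell_3$, and since the inclusion $\mathscr{I}_\mathscr{L}(2) \hookrightarrow \mathscr{O}_\mathbb{P}(2)$ is injective on global sections, the kernel of this homomorphism is exactly the space of triples of linear forms with $\sum_i f_i\ell_i = 0$, which is $\mathscr{V}_{\ell}$. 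So in each case it remains to compute $c$, read off $\dim_k \mathscr{V}_{\ell}$ from Lemma~\ref{l:dimcou}, and exhibit that many linearly independent explicit elements of $\mathscr{V}_{\ell}$.

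For part~\eqref{en:ind}, linear independence of $\ell_1, \ell_2, \ell_3$ forces $c = 3$, so Lemma~\ref{l:dimcou} gives $\dim_k \mathscr{V}_{\ell} = (3-3)n + \binom{3}{2} = 3$. Each of the three triples $(0, \ell_3, -\ell_2)$, $(-\ell_3, 0, \ell_1)$, $(\ell_2, -\ell_1, 0)$ lies in $\mathscr{V}_{\ell}$, since in each case the corresponding sum $f_1\ell_1 + f_2\ell_2 + f_3\ell_3$ has the form $\ell_j\ell_k - \ell_k\ell_j$ and hence vanishes. They are $k$-linearly independent: from a vanishing combination $a(0, \ell_3, -\ell_2) + b(-\ell_3, 0, \ell_1) + c(\ell_2, -\ell_1, 0) = 0$ the first coordinate gives $-b\ell_3 + c\ell_2 = 0$, whence $b = c = 0$ because $\ell_2$ and $\ell_3$ are linearly independent, and then the second coordinate gives $a\ell_3 = 0$, so $a = 0$. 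Being three linearly independent elements of a three-dimensional space, they span $\mathscr{V}_{\ell}$. (Alternatively, this is the statement that linearly independent linear forms form a regular sequence, so that their module of syzygies is generated by the Koszul relations, which in degree one are precisely these three triples.)

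For part~\eqref{en:dep}, the relation $\varphi_1\ell_1 + \varphi_2\ell_2 + \ell_3 = 0$ shows that $\mathscr{L}$, the common zero locus of $\ell_1, \ell_2, \ell_3$, coincides with the zero locus of the linearly independent pair $\ell_1, \ell_2$, hence $c = 2$ and Lemma~\ref{l:dimcou} gives $\dim_k \mathscr{V}_{\ell} = (3-2)n + \binom{2}{2} = n+1$. For every linear form $h \in R$ the triple $(\varphi_1 h, \varphi_2 h, h)$ lies in $\mathscr{V}_{\ell}$, since $\varphi_1 h\ell_1 + \varphi_2 h\ell_2 + h\ell_3 = h(\varphi_1\ell_1 + \varphi_2\ell_2 + \ell_3) = 0$; as $h \mapsto (\varphi_1 h, \varphi_2 h, h)$ is injective, these triples form an $n$-dimensional subspace of $\mathscr{V}_{\ell}$. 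The triple $(\ell_2, -\ell_1, 0)$ also lies in $\mathscr{V}_{\ell}$ and is not of the form $(\varphi_1 h, \varphi_2 h, h)$, because its last entry is zero while $\ell_1, \ell_2$ are non-zero; hence the span of $(\ell_2, -\ell_1, 0)$ together with all the triples $(\varphi_1 h, \varphi_2 h, h)$ has dimension at least $n+1$, and therefore equals $\mathscr{V}_{\ell}$.

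There is no real obstacle in this argument: once Lemma~\ref{l:dimcou} is available, everything reduces to the identification $\mathscr{V}_{\ell} = \mathscr{V}_{3,c}^n$ and elementary linear algebra. The only step needing a modicum of care is the bookkeeping for the codimension $c$ — it equals $3$ when $\ell_1, \ell_2, \ell_3$ are independent and $2$ when there is exactly one relation among them — since the value of $\dim_k \mathscr{V}_{\ell}$, and hence the number of spanning elements to produce, depends on it.
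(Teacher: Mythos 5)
Your proposal is correct and follows essentially the same route as the paper: identify $\mathscr{V}_{\ell}$ with the space $\mathscr{V}_{3,c}^n$ of Lemma~\ref{l:dimcou} (with $c=3$ in case~\eqref{en:ind} and $c=2$ in case~\eqref{en:dep}), read off the dimension, and check that the stated triples are that many linearly independent elements of $\mathscr{V}_{\ell}$. You spell out the independence checks and the injectivity of $\mathscr{I}_\mathscr{L}(2)\hookrightarrow\mathscr{O}_\mathbb{P}(2)$ on global sections a bit more explicitly than the paper does, but the argument is the same.
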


\begin{proof}
We begin with some easy checks: 
\begin{itemize}
\item 
the stated triples satisfy the identity $f_1 \ell_1 + f_2 \ell_2 + f_3 \ell_3 = 0$; 
\item 
if item~\eqref{en:ind} holds, the dimension of the span of the three vectors $(0 , \ell_3 , -\ell_2)$, $(-\ell_3 , 0 , \ell_1)$, $(\ell_2 , -\ell_1 , 0)$ is~$3$; 
\item 
if item~\eqref{en:dep} holds, the dimension of the span of the $n+1$ vectors $(\ell_2 , -\ell_1 , 0)$ and $\ell (\varphi_1 , \varphi_2 , 1)$, as $\ell$ ranges among the $n$ variables of the ring~$R$, is $n+1$.
\end{itemize}
To conclude it suffices to show that the dimension of the vector space $\mathscr{V}_{\ell}$ is~$3$ in case~\eqref{en:ind} and $n+1$ in case~\eqref{en:dep}.

A moment's thought makes it clear that the vector space $\mathscr{V}_{\ell}$ mentioned in this lemma is the same vector space $\mathscr{V}_{r,c}^n$ defined in Lemma~\ref{l:dimcou} in the case $(r,c)=(3,3)$, if the forms $\ell_1,\ell_2,\ell_3$ are independent (item~\eqref{en:ind}), and in the case $(r,c) = (3,2)$, if the forms $\ell_1,\ell_2$ are independent and $\ell_1,\ell_2,\ell_3$ are not (item~\eqref{en:dep}).  Specializing the results of Lemma~\ref{l:dimcou} we finally obtain $\dim \mathscr{V}_{3,3}^n = 3$ and $\dim \mathscr{V}_{3,2}^n = n+1$, as required.
\end{proof}

\section{Main result}

Let $\mathscr{M}_n$ denote the $k$-vector space of $3 \times 3$~matrices whose entries are linear forms in $R$: the dimension of $\mathscr{M}_n$ is $9n$.  We denote the projective space associated to $\mathscr{M}_n$ by $\PM_n$, a projective space of dimension $9n-1$.  By construction, the determinant induces a function $\det \colon \mathscr{M}_n \to R$ whose image is contained in the vector space $R_3$ of forms of degree~$3$ in~$R$.  The dimension of the vector space $R_3$ is $r_3 = \binom{n+2}{3}$, so that the vanishing of the function $\det$ corresponds to the simultaneous vanishing of $r_3$ cubic forms.

\begin{definition}
The space of {\emph{$3 \times 3$~singular matrices of linear forms}} is the subscheme $\mathscr{F}_n$ of $\PM_n$ defined by the vanishing of the function $\det$.
\end{definition}

When the index $n$ is clear from the context, we will omit it; for instance, we may denote the scheme $\mathscr{F}_n$ simply by $\mathscr{F}$.

The space $\mathscr{F}$ is our main interest.  An immediate consequence of the definition of the space $\mathscr{F}$ is that multiplying a $3 \times 3$~matrix of linear forms by an invertible matrix with entries in $k$ on the left or on the right stabilizes $\mathscr{F}$ and its complement.

\begin{definition} \label{d:c}
Two $f \times g$~matrices $X,Y$ with entries in $R$ are $f$-equivalent if there are an invertible $f \times f$~matrix $F$ and an invertible $g \times g$~matrix $G$ both with entries in $k$ such that $Y = FXG$.
\end{definition}

There is a straightforward relationship between $\mathscr{F}_n$ and linear subspaces of determinantal varieties.  Let $\mathbb{P}^8$ denote the $8$-dimensional projective space with coordinates $\{ x_{ij} \}_{i,j \in \{1,2,3\}}$ and let $X$ be the determinantal cubic with equation 
\[
X \colon \hspace{20pt}
\det \begin{pmatrix}
x_{11} & x_{12} & x_{13} \cr
x_{21} & x_{22} & x_{23} \cr
x_{31} & x_{32} & x_{33}
\end{pmatrix} = 0.
\]
A point in the variety $\mathscr{F}_n$ is a linear map of a vector space to the variety $X$.  Equivalently, points of $\mathscr{F}_n$ correspond to (not necessarily injective) parameterizations of linear subspaces contained in $X$.

\begin{theorem} \label{t:comp}
Let $X$ be a $3 \times 3$ matrix of linear forms.  If the determinant $\det(X)$ vanishes, then $X$ is $f$-equivalent to one of the following:
\begin{enumerate}
\item \label{i:rig}
a matrix with a zero row;
\item \label{i:col}
a matrix with a zero column;
\item \label{i:qua}
a matrix with a zero square;
\item \label{i:ant}
an antisymmetric matrix.
\end{enumerate}
\end{theorem}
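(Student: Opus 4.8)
The plan is to case-split on the rank $r$ of $X$ over the fraction field $K=k(x_1,\dots,x_n)$; the hypothesis $\det(X)=0$ forces $r\le 2$, and the case $r=2$ carries all of the content. If $r=0$ then $X=0$ and $X$ is of every type. If $r=1$, Lemma~\ref{l:rangouno} writes $X=\mathbf u\mathbf v^t$ with $\mathbf u,\mathbf v$ non-zero and $\deg\mathbf u+\deg\mathbf v=\deg X=1$; hence one of $\mathbf u,\mathbf v$ has constant entries, and a single row operation (if $\mathbf u$ is constant) or column operation (if $\mathbf v$ is constant) shows that $X$ is $f$-equivalent to a matrix with two zero rows or with two zero columns, hence is of type~\eqref{i:rig} or~\eqref{i:col}.

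Assume now $r=2$, so that $X$ is homogeneous of degree $1$ and of rank $2$. The first step is to produce a kernel vector of small, controlled degree: Lemma~\ref{l:cramer} yields non-zero $\mathbf u,\mathbf v\in R^3$ with $X\mathbf u=\mathbf v^tX=0$ and $\deg\mathbf u+\deg\mathbf v=2$. If $\deg\mathbf u=0$, then $\mathbf u$ is a non-zero constant vector in $\ker X$, and choosing for $G$ an invertible matrix over $k$ with first column $\mathbf u$ makes $XG$ have a zero first column, case~\eqref{i:col}; symmetrically $\deg\mathbf v=0$ gives case~\eqref{i:rig}. This leaves $\deg\mathbf u=\deg\mathbf v=1$, and if the entries of $\mathbf u$ had a common non-constant factor they would all be proportional to one linear form, so $\mathbf u$ would be a linear form times a constant vector, again exhibiting a constant kernel vector and landing in case~\eqref{i:col}. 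Therefore I may assume $\mathbf w:=\mathbf u=(w_1,w_2,w_3)^t$ is primitive of degree $1$; in particular primitivity forbids the $w_i$ from all being proportional, so $\dim_k\operatorname{span}(w_1,w_2,w_3)\in\{2,3\}$.

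The heart of the proof is then to read the identity $X\mathbf w=0$ row by row: each row of $X$, viewed as a triple of linear forms, lies in the space $\mathscr V_{\mathbf w}$ of Lemma~\ref{l:trefl}, which I apply in the two surviving cases. If $w_1,w_2,w_3$ are $k$-linearly independent, Lemma~\ref{l:trefl}\eqref{en:ind} writes each row of $X$ in the basis $(0,w_3,-w_2),(-w_3,0,w_1),(w_2,-w_1,0)$ of $\mathscr V_{\mathbf w}$; assembling the coefficients into a $3\times3$ matrix $P$ with entries in $k$ gives $X=PN$, where $N$ is the antisymmetric matrix whose rows are those three vectors. If $P$ is invertible, this exhibits $X$ as $f$-equivalent to the antisymmetric matrix $N$, case~\eqref{i:ant}; if $\operatorname{rank}P\le 2$, writing $P=F_1\left(\begin{smallmatrix}I_r&0\\0&0\end{smallmatrix}\right)F_2$ with $F_1,F_2$ invertible over $k$ shows $F_1^{-1}X$ has a zero row, case~\eqref{i:rig}. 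If instead $\dim_k\operatorname{span}(w_1,w_2,w_3)=2$, then after permuting the columns of $X$ (which permutes the $w_i$) I may assume $w_1,w_2$ independent and $\varphi_1w_1+\varphi_2w_2+w_3=0$ for constants $\varphi_1,\varphi_2$; by Lemma~\ref{l:trefl}\eqref{en:dep} each row of $X$ equals $\alpha_j(w_2,-w_1,0)+\ell_j(\varphi_1,\varphi_2,1)$ with $\alpha_j\in k$ and $\ell_j$ a linear form, and the column operations $C_1\mapsto C_1-\varphi_1C_3$, $C_2\mapsto C_2-\varphi_2C_3$ turn the first two columns of $X$ into $w_2\boldsymbol\alpha$ and $-w_1\boldsymbol\alpha$ for $\boldsymbol\alpha=(\alpha_1,\alpha_2,\alpha_3)^t\in k^3$. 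If $\boldsymbol\alpha=0$ these columns vanish; if $\boldsymbol\alpha\ne 0$, a row operation taking $\boldsymbol\alpha$ to $(1,0,0)^t$ leaves the last two rows supported on the third column. Either way $X$ is $f$-equivalent to a matrix with a zero $2\times2$ square, case~\eqref{i:qua}, completing the case analysis.

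I expect the main obstacle to be exactly this rank-$2$ step. Two things need care: first, passing from the crude degree bound $\deg\mathbf u+\deg\mathbf v=2$ to the normal form in which $\mathbf w$ is primitive of degree precisely $1$ — each of the splittings $0+2$ and $2+0$, and the non-primitive situation, must be routed into cases~\eqref{i:rig}/\eqref{i:col}; second, recognising the antisymmetric matrix $N$ concealed in the factorisation $X=PN$ and correctly deciding, from the rank of $P$ (respectively from whether $\boldsymbol\alpha$ vanishes), whether $X$ falls into case~\eqref{i:ant}, \eqref{i:rig} or \eqref{i:qua}.
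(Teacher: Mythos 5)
Your proposal is correct and follows essentially the same route as the paper: split by rank, use Lemma~\ref{l:rangouno} in rank one, Lemma~\ref{l:cramer} to produce a degree-one primitive kernel vector in rank two, and Lemma~\ref{l:trefl} to describe the rows, landing in the antisymmetric case when the kernel entries are independent and the zero-square case otherwise. The only differences are cosmetic: you make explicit a few reductions the paper leaves implicit, such as the degenerate rank of the constant coefficient matrix $P$ and the case where the vector $(\alpha_1,\alpha_2,\alpha_3)$ vanishes.
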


\begin{proof}
If $X$ is the zero matrix, then there is nothing to prove.  If the rank of $X$ is~$1$, then we can apply Lemma~\ref{l:rangouno} to conclude that either the rows or the columns of $X$ are $k$-proportional and we are in case~\eqref{i:rig} or~\eqref{i:col}.

Thus, we reduce to the case in which the rank of the matrix $X$ is~$2$ and we apply Lemma~\ref{l:cramer}: let $\mathbf{u},\mathbf{v}$ be non-zero column vectors such that $\mathbf{u}$ is in the kernel of $X$, $\mathbf{v}$ is in the kernel of $X^t$ and $\deg \mathbf{u} + \deg \mathbf{v} = 2$.  If one of the vectors $\mathbf{u}$ or $\mathbf{v}$ has degree~$0$, i.e.\ is constant, then the columns or the rows of $X$ are $k$-proportional and we are in case~\eqref{i:rig} or~\eqref{i:col}.  Therefore we further reduce to the case in which the vectors $\mathbf{u}$ and $\mathbf{v}$ have degree~$1$ and are not $R$-proportional to constant vectors: the entries of $\mathbf{u}$ span a $k$-vector space of dimension at least~$2$ of linear forms and similarly for the entries of $\mathbf{v}$.  The rows of the matrix $X$ therefore are triples contained in the vector space $\mathscr{V}_{\mathbf{u}}$ of Lemma~\ref{l:trefl} and, by our reductions, they span a three-dimensional linear subspace of $\mathscr{V}_{\mathbf{u}}$.

If the entries $u_1,u_2,u_3$ of $\mathbf{u}$ are $k$-linearly independent, then after multiplying the matrix $X$ on the left by an invertible matrix with entries in $k$, we may assume that the rows of $X$ are $(0 \; u_3 \; {-u_2})$, $({-u_3} \; 0 \; u_1)$, $(u_2 \; {-u_1} \; 0)$: thus $X$ is $f$-equivalent to the antisymmetric matrix 
$\begin{pmatrix}
0 & u_3 & -u_2 \cr
-u_3 & 0 & u_1 \cr
u_2 & -u_1 & 0
\end{pmatrix}$.

If the entries $u_1,u_2,u_3$ of $\mathbf{u}$ are $k$-linearly dependent and $\varphi_1 u_1 + \varphi_2 u_2 + \varphi_3 u_3 = 0$ is a non-trivial linear relation among them, then after multiplying the matrix $X$ on the left by an invertible matrix with entries in $k$, we may assume that the last two of the rows of $X$ are $(\ell \varphi_1 \; \ell \varphi_2 \; \ell \varphi_3)$ and $(m \varphi_1 \; m \varphi_2 \; m \varphi_3)$, where $\ell,m$ are linear forms.  The matrix $X$ is $f$-equivalent to the matrix 
$\begin{pmatrix}
\star & \star & \star \cr
\ell \varphi_1 & \ell \varphi_2 & \ell \varphi_3 \cr
m \varphi_1 & m \varphi_2 & m \varphi_3
\end{pmatrix}$ and hence, after a sequence of column operations, also to the matrix 
$\begin{pmatrix}
\star & \star & \star \cr
\ell & 0 & 0 \cr
m & 0 & 0
\end{pmatrix}$ with a zero square.
\end{proof}

We define four linear subspaces of $\PM_n$:

\smallskip

\noindent
\begin{tabular}{llll}
$\bullet$
(zero row) &
$\! R_n = \left\{ \left[ \begin{matrix}
\star & \star & \star \cr
\star & \star & \star \cr
0 & 0 & 0 
\end{matrix} \right] \in \PM_n \right\}$ & 
$\! \dim R_n = 6n-1$; \\[30pt]
$\bullet$
(zero square) &
$\! S_n = \left\{ \left[ \begin{matrix}
\star & \star & \star \cr
\star & 0 & 0 \cr
\star & 0 & 0
\end{matrix} \right] \in \PM_n \right\}$ & 
$\! \dim S_n = 5n-1$; \\[30pt]
$\bullet$
(antisymmetric) &
$\! A_n = \left\{ \left[ \begin{matrix}
0 & \varphi & -\chi \cr
-\varphi & 0 & \psi \cr
\chi & -\psi & 0 
\end{matrix} \right] \in \PM_n \right\}$ & 
$\! \dim A_n = 3n-1$; \\[30pt]
$\bullet$
(zero column) &
$\! C_n = \left\{ \left[ \begin{matrix}
\star & \star & 0 \cr
\star & \star & 0 \cr
\star & \star & 0 
\end{matrix} \right] \in \PM_n \right\}$ & 
$\! \dim C_n = 6n-1$.
\end{tabular}

\smallskip

Let $G = {\rm GL}_3 \times {\rm GL}_3$ be the group of ordered pairs of invertible $3 \times 3$~matrices.  The group $G$ acts on $M$ by the rule $(f,g) \cdot m = fmg^{t}$ and this action induces an analogous action on $\PM$.  The orbits under the $G$-action are precisely the $f$-equivalence classes of matrices.  Denote 
\begin{itemize}
\item
by $\mathscr{R}_n$ the closure of the $G$-orbit of $R_n$, 
\item
by $\mathscr{S}_n$ the closure of the $G$-orbit of $S_n$, 
\item
by $\mathscr{A}_n$ the closure of the $G$-orbit of $A_n$, 
\item
by $\mathscr{C}_n$ the closure of the $G$-orbit of $C_n$.
\end{itemize}
Each of these schemes is clearly irreducible, being the orbit of a linear subspace under an irreducible group.  As a consequence of Theorem~\ref{t:comp}, we know that each $G$-orbit of a point of $\mathscr{F}$ intersects at least one of the linear subspaces $R$, $S$, $A$, $C$: the set of $k$-rational points of $\mathscr{F}$ is the union of the four subvarieties $\mathscr{R}$, $\mathscr{S}$, $\mathscr{A}$, $\mathscr{C}$.  In Section~\ref{s:stab} we examine these four subvarieties of $\mathscr{F}$.  We shall see that the irreducible components of the scheme $\mathscr{F}$ are the varieties $\mathscr{R}$, $\mathscr{S}$, $\mathscr{A}$, $\mathscr{C}$.

\section{The four components} \label{s:stab}

We analyze here the four components of the scheme $\mathscr{F}_n$ that we determined in Theorem~\ref{t:comp}.  Throughout this section, we assume that the integer $n$ satisfies the inequality $n \geq 2$: the case $n=1$ is essentially the case of $3 \times 3$~matrices over a field.  We begin by determining the stabilizers in $G$ of each of the four linear subspaces $R_n$, $S_n$, $A_n$, $C_n$.

\subsection*{The stabilizers of matrices with a zero row or column}
Let $(f,g)$ in $G$ be a pair preserving the set of matrices $R$ with last row equal to zero.  Observe that we can multiply the second element of the pair, $g$, by any invertible matrix and the pair still preserves the set $R$.  It is then immediate to check that the stabilizers of $R$ and $C$ are the pairs of invertible matrices of the form 
\[
\begin{array}{c@{\hspace{20pt}}c@{\hspace{20pt}}c}
{\textrm{Stabilizer of }} R & & 
{\textrm{Stabilizer of }} C \\[5pt]
\left(
\begin{pmatrix}
\star & \star & \star \cr
\star & \star & \star \cr
0 & 0 & \star 
\end{pmatrix} , 
g 
\right)
& , &
\left(
f , 
\begin{pmatrix}
\star & \star & \star \cr
\star & \star & \star \cr
0 & 0 & \star 
\end{pmatrix} 
\right)
\end{array}
\]
for any $f,g$ in ${\rm GL}_3$.

\subsection*{The stabilizer of matrices with a zero square}
Let $(f,g)$ in $G$ be a pair preserving the set of matrices $S$ with a square of zeros.  For $i,j$ in $\{1,2,3\}$, let $f_{ij}$ and $g_{ij}$ denote the $(i,j)$-th entry of $f$ and of $g$ and let $e_{ij}$ denote the matrix with $(i,j)$-th entry equal to~$1$ and remaining entries equal to~$0$.  The product $f e_{ij} g^t$ is the rank one matrix that is the product of the $i$-th column of $f$ times the $j$-th row of $g^t$.  Thus the condition that the two matrices $f e_{11} g^t$, $f e_{12} g^t$ lie in $S$ translates to the equations 
\begin{eqnarray*}
& f_{21} g_{21} = f_{21} g_{31} = f_{21} g_{22} = f_{21} g_{32} = 0 \\[5pt]
& f_{31} g_{21} = f_{31} g_{31} = f_{31} g_{22} = f_{31} g_{32} = 0 .
\end{eqnarray*}
Since the matrix $g$ is invertible, the square on the four entries $g_{21}$, $g_{31}$, $g_{22}$, $g_{32}$ of $g$ cannot vanish and we deduce that the entries $f_{21}$ and $f_{31}$ of $f$ vanish.  Similarly, the entries $g_{21}$ and $g_{31}$ of $g$ vanish as well and finally the stabilizer of $S$ consists of the matrices of the form 
\[
\left(
\begin{pmatrix}
\star & \star & \star \cr
0 & \star & \star \cr
0 & \star & \star 
\end{pmatrix} , 
\begin{pmatrix}
\star & \star & \star \cr
0 & \star & \star \cr
0 & \star & \star 
\end{pmatrix} 
\right).
\]

\subsection*{The stabilizer of antisymmetric matrices}
Let $(f,g)$ in $G$ be a pair preserving the set of antisymmetric matrices $A$; for $s,t,u$ in $\{0,1\}$ define the antisymmetric matrix $e_{stu}$ by 
\[
e_{stu} = 
\begin{pmatrix}
0 & s & -t \cr
-s & 0 & u \cr
t & -u & 0
\end{pmatrix}.
\]
The matrices $f e_{100}g^t , f e_{010}g^t , f e_{001}g^t$ are antisymmetric; in particular, their diagonal entries vanish.  The implied equations force each row of the matrix $g$ to be proportional to the corresponding row of~$f$.  Imposing now the further constraints arising from the matrices $f e_{100}g^t$, $f e_{010}g^t$, $f e_{001}g^t$ being antisymmetric shows that the matrices $f$ and $g$ themselves are proportional; clearly, such matrices stabilize $A$ and we find that the stabilizer of $A$ consists of matrices of the form 
\[
\hphantom{{\textrm{ and }} \lambda \in k^*}
(f , \lambda f) \quad \quad {\textrm{for }} f \in {\rm GL}_3 {\textrm{ and }} \lambda \in k^*.
\]

Having determined the stabilizers of the linear subspaces $R_n$, $S_n$, $A_n$, $C_n$ we now compute the dimensions of the irreducible components of the space $\mathscr{F}_n$ of $3 \times 3$~singular matrices of linear forms.  Indeed, the dimension of the orbit under $G$ of each linear subspace is the sum of the dimension of the linear subspace plus the codimension of their stabilizer:
\begin{eqnarray*}
\dim (\mathscr{R}_n) & = & 6n + 1 , \\
\dim (\mathscr{S}_n) & = & 5n + 3 , \\
\dim (\mathscr{A}_n) & = & 3n + 7 , \\
\dim (\mathscr{C}_n) & = & 6n + 3 .
\end{eqnarray*}

\subsubsection*{The special case $n=2$}
In the special case in which the number of variables of the ring $R$ equals~$2$, the dimensions of the four subvarieties $\mathscr{R}_2$, $\mathscr{A}_2$, $\mathscr{S}_2$, $\mathscr{C}_2$ coincides and equals~$13$.  An easy check shows that the subvarieties $\mathscr{A}_2$ and $\mathscr{S}_2$ coincide in this case, so that $\mathscr{F}_2$ is the union of three subvarieties: $\mathscr{R}_2$, $\mathscr{A}_2 = \mathscr{S}_2$, $\mathscr{C}_2$.  A quick computation using the computer algebra package {\sc{Magma}} shows that, as subvarieties of the projective space $\PM_2 \simeq \mathbb{P}^{17}$, the degree of $\mathscr{R}_2$ and of $\mathscr{C}_2$ is~$15$ and the degree of $\mathscr{S}_2=\mathscr{A}_2$ is~$51$.

We introduce $9n$ coordinate functions on $\mathscr{M}_n$: for $i,j \in \{1,2,3\}$ and $\ell \in \{1 , \ldots , n\}$, let $a_{ij}^\ell$ denote the coefficient of $x_\ell$ in the $(i,j)$-th entry of a matrix in $\mathscr{M}_n$:
\[
\begin{pmatrix}
a_{11}^1 x_1 + a_{11}^2 x_2 + \cdots + a_{11}^n x_n \;\; & a_{12}^1 x_1 + a_{12}^2 x_2 + \cdots + a_{12}^n x_n \;\; & 
a_{13}^1 x_1 + a_{13}^2 x_2 + \cdots + a_{13}^n x_n \\[5pt]
a_{21}^1 x_1 + a_{21}^2 x_2 + \cdots + a_{21}^n x_n \;\; & a_{22}^1 x_1 + a_{22}^2 x_2 + \cdots + a_{22}^n x_n &  \;\;
a_{23}^1 x_1 + a_{23}^2 x_2 + \cdots + a_{23}^n x_n \\[5pt]
a_{31}^1 x_1 + a_{31}^2 x_2 + \cdots + a_{31}^n x_n \;\; & a_{32}^1 x_1 + a_{32}^2 x_2 + \cdots + a_{32}^n x_n &  \;\;
a_{33}^1 x_1 + a_{33}^2 x_2 + \cdots + a_{33}^n x_n 
\end{pmatrix} .
\]

\subsection*{The orbits of matrices with a vanishing row or column}
Let $M$ be a matrix in $\mathscr{M}_n$.  If $M$ lies in $\mathscr{R}_n$, then it follows that the rows of $M$ are linearly independent.  Making use of the coordinates $\{ a_{ij}^\ell \}$ defined above, we form the $3n \times 3$~matrix 
\[
A = \begin{pmatrix}
a_{11}^1 & a_{11}^2 & \cdots & a_{11}^n \;\; & a_{12}^1 & \cdots & a_{12}^n & \;\; a_{13}^1 & \cdots & a_{13}^n \\[5pt]
a_{21}^1 & a_{21}^2 & \cdots & a_{21}^n \;\; & a_{22}^1 & \cdots & a_{22}^n & \;\; a_{23}^1 & \cdots & a_{23}^n \\[5pt]
a_{31}^1 & a_{31}^2 & \cdots & a_{31}^n \;\; & a_{32}^1 & \cdots & a_{32}^n & \;\; a_{33}^1 & \cdots & a_{33}^n 
\end{pmatrix} .
\]
Therefore, in terms of the associated matrix $A$, the matrix $M$ lies in $\mathscr{R}_n$ if and only if the matrix $A$ has rank at most~$2$.  Hence, the $\binom{3n}{3}$ determinants of the $3 \times 3$~submatrices of the matrix $A$ vanish exactly on the subvariety $\mathscr{R}_n$.  Thus, the variety $\mathscr{R}_n$ is actually the determinantal variety of $3n \times 3$~matrices of rank at most~$2$.

Of course, transposed assertions apply to the variety $\mathscr{C}_n$.

\subsection*{The orbits of antisymmetric matrices or of matrices with a zero square}
We do not know a similar explicit description of the varieties $\mathscr{A}_n$ and $\mathscr{S}_n$.  We checked that the ideals of $\mathscr{A}_n$ and $\mathscr{S}_n$ agree with the ideal of $\mathscr{F}_n$ in all degrees up to 4 in a few examples.

\section*{Concluding questions}

An immediate question that arises from our work is to study further the orbit $\mathscr{S}_n$ of the matrices with a square of zeros and the orbit $\mathscr{A}_n$ of the antisymmetric matrices.

We would also like to present the following personal point of view on the initial question.  Computing a determinant involves usually a fair number of operations.  Moreover, if the calculation shows that the determinant vanishes, I usually try to find a ``justification'' of this fact that is independent of computing the determinant.  Possible such justifications are 
\begin{itemize}
\item
some evident linear combination of the rows or columns of the matrix that vanishes (e.g., one row is the sum of two other rows);
\item
the matrix is of odd order and is antisymmetric;
\item
the matrix has ``lots'' of zeros, (e.g., a $3 \times 3$ matrix with a zero square).
\end{itemize}
These are precisely the irreducible components of the schemes $\mathscr{F}_n$!  As the results in this paper handle the case of matrices of order~$3$, we are naturally lead to wonder what happens next.

\subsection*{Question}
Let $n$ be a positive integer.  What are the irreducible components of the spaces of $n \times n$ matrices of linear forms and identically vanishing determinant?

\begin{bibdiv}
\begin{biblist}

\bib{ei}{article}{
   author={Eisenbud, David},
   title={Linear sections of determinantal varieties},
   journal={Amer. J. Math.},
   volume={110},
   date={1988},
   number={3},
   pages={541--575},
   issn={0002-9327}
}

\bib{eiret}{article}{
   author={Eisenbud, David},
   title={Syzygies, degrees, and choices from a life in mathematics.
   Retiring presidential address},
   journal={Bull. Amer. Math. Soc. (N.S.)},
   volume={44},
   date={2007},
   number={3},
   pages={331--359},
   issn={0273-0979}
}

\bib{eiha}{article}{
   author={Eisenbud, David},
   author={Harris, Joe},
   title={On varieties of minimal degree (a centennial account)},
   conference={
      title={Algebraic geometry, Bowdoin, 1985},
      address={Brunswick, Maine},
      date={1985},
   },
   book={
      series={Proc. Sympos. Pure Math.},
      volume={46},
      publisher={Amer. Math. Soc., Providence, RI},
   },
   date={1987},
   pages={3--13}
}

\bib{harris}{book}{
   author={Harris, Joe},
   title={Algebraic geometry},
   series={Graduate Texts in Mathematics},
   volume={133},
   note={A first course;
   Corrected reprint of the 1992 original},
   publisher={Springer-Verlag, New York},
   date={1995},
   pages={xx+328},
   isbn={0-387-97716-3}
}

\bib{katz}{article}{
   author={Katzman, Mordechai},
   title={On ideals of minors of matrices with indeterminate entries},
   journal={Comm. Algebra},
   volume={36},
   date={2008},
   number={1},
   pages={104--111},
   issn={0092-7872}
}

\bib{kupula}{article}{
   author={Kustin, Andrew R.},
   author={Polini, Claudia},
   author={Ulrich, Bernd},
   title={A matrix of linear forms which is annihilated by a vector of
   indeterminates},
   journal={J. Algebra},
   volume={469},
   date={2017},
   pages={120--187},
   issn={0021-8693}
}

\bib{raicat}{article}{
   author={Raicu, Claudiu},
   title={$3\times3$ minors of catalecticants},
   journal={Math. Res. Lett.},
   volume={20},
   date={2013},
   number={4},
   pages={745--756},
   issn={1073-2780}
}

\bib{raiant}{article}{
   author={Raicu, Claudiu},
   title={Secant varieties of Segre-Veronese varieties},
   journal={Algebra Number Theory},
   volume={6},
   date={2012},
   number={8},
   pages={1817--1868}
}

\end{biblist}
\end{bibdiv}

\end{document}